\documentclass[conference]{IEEEtran}
\IEEEoverridecommandlockouts
\usepackage{cite}
\usepackage{amsmath,amssymb,amsfonts}
\usepackage{algorithmic}
\usepackage{graphicx}
\usepackage{textcomp}
\usepackage{xcolor}
\usepackage{amsthm}
\usepackage{algorithm,algorithmic}
\newlength\myindent 
\setlength\myindent{0.25em} 
\newcommand\bindent{%
  \begingroup 
  \setlength{\itemindent}{\myindent} 
  \addtolength{\algorithmicindent}{\myindent} 
}
\newcommand\eindent{\endgroup} 
\def\BibTeX{{\rm B\kern-.05em{\sc i\kern-.025em b}\kern-.08em
    T\kern-.1667em\lower.7ex\hbox{E}\kern-.125emX}}
\def\grad{\nabla}

\def\bq{\mathbf{q}}
\def\br{\mathbf{r}}
\def\bs{\mathbf{s}}

\def\bu{\mathbf{u}}

\def\bx{\mathbf{x}}  
\def\by{\mathbf{y}}
\def\bz{\mathbf{z}}

\def\bC{\mathbf{C}}
\def\bD{\mathbf{D}}

\def\bI{\mathbf{I}}
\def\bJ{\mathbf{J}}

\def\bL{\mathbf{L}}

\def\bV{\mathbf{V}}

\def\cB{\mathcal{B}}

\def\cE{\mathcal{E}}

\def\cG{\mathcal{G}}
\def\cH{\mathcal{H}}
\def\cI{\mathcal{I}}

\def\cL{\mathcal{L}}

\def\cN{\mathcal{N}}
\def\cO{\mathcal{O}}

\def\cS{\mathcal{S}}
\def\cT{\mathcal{T}}

\def\cX{\mathcal{X}}

\def\mE{\mathbb{E}}

\def\smskip{\smallskip}

\def\texitem#1{\par\smskip\noindent\hangindent 25pt
               \hbox to 25pt {\hss #1 ~}\ignorespaces}

\def\abs#1{\left|#1\right|}
\def\norm#1{\left\|#1\right\|}

\newcommand{\BEAS}{\begin{eqnarray*}}
\newcommand{\EEAS}{\end{eqnarray*}}
\newcommand{\BEA}{\begin{eqnarray}}
\newcommand{\EEA}{\end{eqnarray}}
\newcommand{\BEQ}{\begin{eqnarray}}
\newcommand{\EEQ}{\end{eqnarray}}
\newcommand{\BIT}{\begin{itemize}}
\newcommand{\EIT}{\end{itemize}}
\newcommand{\BNUM}{\begin{enumerate}}
\newcommand{\ENUM}{\end{enumerate}}

\newcommand{\BA}{\begin{array}}
\newcommand{\EA}{\end{array}}


\newcommand{\ones}{\mathbf 1}

\newcommand{\reals}{\mathbb{R}}




\newcommand{\diag}{\mathop{\bf diag}}




\newcommand{\dom}{\mathop{\bf dom}}






\newif\ifpagenumbering
\pagenumberingtrue

\pagenumberingfalse

%
%
\newsavebox{\theorembox}
\newsavebox{\lemmabox}
\newsavebox{\defnbox}
\newsavebox{\corollarybox}
\newsavebox{\remarkbox}
\newsavebox{\assbox}
\savebox{\theorembox}{\noindent\bf Theorem}
\savebox{\lemmabox}{\noindent\bf Lemma}
\savebox{\defnbox}{\noindent\bf Definition}
\savebox{\corollarybox}{\noindent\bf Corollary}
\savebox{\remarkbox}{\noindent\bf Remark}
\newtheorem{remark}{\usebox{\remarkbox}}[section]
\newtheorem{defn}{\usebox{\defnbox}}





\newtheorem{theorem}{Theorem}[section]
\newtheorem{corollary}{Corollary}[theorem]
\newtheorem{assumption}{Assumption}[section]
\newtheorem{lemma}[theorem]{Lemma}

\usepackage{subfig}
\def\prox#1{\mathbf{prox}_{#1}}
\DeclareMathOperator*{\argmin}{\arg\!\min}

\def\fprod#1{\left\langle#1\right\rangle}
\def\blambda{\boldsymbol{\lambda}}

\begin{document}
\allowdisplaybreaks
\title{An Accelerated Asynchronous Distributed Method for Convex Constrained
Optimization Problems \\
\thanks{The first and third authors would like to acknowledge support from NSF ECCS-2127696.}
}

\author{\IEEEauthorblockN{Nazanin Abolfazli}
\IEEEauthorblockA{\textit{Systems and Industrial Engineering} \\
\textit{University of Arizona}\\
Tucson, AZ 85721 \\
nazaninabolfazli@arizona.edu}
\and
\IEEEauthorblockN{Afrooz Jalilzadeh}
\IEEEauthorblockA{\textit{Systems and Industrial Engineering} \\
\textit{University of Arizona}\\
Tucson, AZ 85721 \\
afrooz@arizona.edu}
\and
\IEEEauthorblockN{Erfan Yazdandoost Hamedani}
\IEEEauthorblockA{\textit{Systems and Industrial Engineering} \\
\textit{University of Arizona}\\
Tucson, AZ 85721 \\
erfany@arizona.edu}
}
\maketitle

\begin{abstract}
We consider a class of multi-agent cooperative consensus optimization problems with local nonlinear convex constraints where only those agents connected by an edge can directly communicate, hence, the optimal consensus decision lies in the intersection of these private sets. We develop an asynchronous distributed accelerated primal-dual algorithm to solve 
the considered problem. The proposed scheme is the first asynchronous method with an optimal convergence guarantee for this class of problems, to the best of our knowledge. In particular, we provide an optimal convergence rate of $\mathcal O(1/K)$ for suboptimality, infeasibility, and consensus violation.
\end{abstract}

\begin{IEEEkeywords}
Multi-agent distributed optimization, asynchronous algorithm, constrained optimization, convergence rate
\end{IEEEkeywords}

\section{Introduction}
Let $\cG=(\cN,\cE)$ denote a connected undirected graph of $N$ computing nodes where $\cN\triangleq \{1,\hdots,N\}$ and $\cE\subseteq \cN\times \cN$ represents the set of edges. We consider the following constrained optimization problem over network $\cG$:
\begin{align}\label{eq:central_problem}
    &\min_{x\in \reals^n} ~\sum_{i\in\cN}\varphi_i(x)\triangleq f_i(x)+\rho_i(x) \\
    &\quad \hbox{s.t.}\quad  g_i(x)\leq 0, ~ i\in\cN,\nonumber
\end{align}
where $x$ denotes the global decision variable; $\rho_i: \mathbb{R}^n \rightarrow \mathbb{R}\cup\{+\infty\}$ is a possibly \emph{non-smooth} convex function with easy-to-compute proximal map; $f_i: \mathbb{R}^n \rightarrow \mathbb{R}$ is a \emph{smooth} convex function; 
and $g_i:\reals^n \to \reals^{m_i}$ is a vector-valued convex function. We assume that each agent $i\in\cN$ has only access to local information, i.e., $f_i$, $\rho_i$, and $g_i$. Our objective is to develop an efficient algorithm with a convergence guarantee for solving \eqref{eq:central_problem} in a decentralized fashion using the computing nodes $\cN$ and exchanging information only along the edges $\cE$.

Decentralized optimization over communication networks
has various applications. Here we discuss a few applications. 
1) In \emph{multi-agent control design},
consider computing an optimal consensus decision satisfying the constraint of each agent $i\in\cN$, involving some {\it uncertain parameter} $q_i\in\reals^n$, with at least $1-\epsilon$ probability, i.e., 
$\min_x\{\sum_{i\in\cN} f_i(x)\mid\mathbb{P}(\{q_i: q_i^\top x\leq b_i\})\geq 1-\epsilon, i\in\cN\}$. 
This problem can be formulated as a minimization 
over an intersection of ellipsoids under a particular 
distribution of the uncertain parameters  \cite{lagoa2005probabilistically}. 2) In \emph{multi-agent localization in sensor networks} one needs to collaboratively locate a target $\bar{x}\in\reals^n$. Suppose each agent $i\in\cN$ has a directional sensor that can detect a target when it belongs to $\cX_i=\{y_i:~\norm{A_iy_i-b_i}_2\leq \eta_i\}$. Let $\cI=\{i\in\cN:~\bar{x}\in \cX_i\}$. Therefore, the target location $\bar{x}$ can be estimated by solving $\min_x\{\norm{x}_2:~x\in\cap_{i\in\cI} \cX_i\}$. 
3) In \emph{distributed robust optimization}, the goal is to 
solve $\min_{x\in X}\{\sum_{i\in\cN} f_i(x)\mid g_i(x,q)\leq 0, ~\forall q\in Q,~ \forall i\in\cN\}$ where $Q$ represents an uncertainty set. Using a scenario-based approach~\cite{you2018distributed}, this problem can be reformulated as \eqref{eq:central_problem}. 

Recently, there have been many studies on developing distributed algorithms for convex constrained consensus optimization problems subject to (non)linear constraints with a convergence rate guarantee \cite{chang2014distributed,aybat2016primal,hamedani2021decentralized}. 
However, the update of such algorithms is in a synchronous fashion which requires access to a global clock, thus largely limiting their applicability. Due to the lack of such an assumption, i.e, an agent has to work based on
its own clock, the development of asynchronous algorithms is of prime importance. Additionally, in many applications, networks are vulnerable to certain possible link failures and some agents may not implement any operation at a
certain time instant. Therefore, asynchronous implementation is crucial in process of communication and
computation. 
Next, we briefly summarize the related research, and then we state our contributions. 

\subsection{Literature Review}
In the past few years, numerous research studies have been conducted concerning distributed optimization methods. 
In the absence of constraints, various methods under both static and time-varying communication network have been studied, such as \cite{jakovetic2013convergence,nedic2014distributed,shi2015extra} to name a few. 
For convex optimization problems with easy-to-project local constraints, in \cite{lee2013distributed} authors introduced a distributed random projection algorithm, that can be employed by multiple agents connected over a 
time-varying network, while a proximal minimization perspective is proposed in \cite{margellos2017distributed}. In a time-varying setting, \cite{nedic2010constrained} suggested a projected subgradient method to solve distributed convex optimization problems. 
For minimizing multi-agent convex optimization problems with linearly coupled constraints over networks, the author in \cite{necoara2013random} proposed (primal) randomized block-coordinate descent methods.
Moreover, there have been several methods proposed  considering a nonconvex objective function subject to easy-to-project or linear constraints such as \cite{bianchi2012convergence,di2016next,sun2016distributed,wai2016projection,hong2017prox}. 


Recently primal-dual methods have become a popular approach for solving distributed optimization problems (see e.g.,   \cite{chang2014distributed,shi2018augmented}).
In particular, Alternating Directions Method of Multipliers (ADMM) is the basis of many effective distributed algorithms such as those presented in \cite{chang2014multi,makhdoumi2017convergence,ling2015dlm}. Due to the fact that the primal variables must achieve successive minimizations at each iteration, distributed ADMM is computationally expensive \cite{ling2015dlm}.  To address this issue, the linearized ADMM algorithms have been presented in \cite{ling2015dlm} and \cite{aybat2017distributed}. 

Although the conventional implementation of distributed algorithms requires synchronous communication between agents, distributed systems may commonly use asynchronous communication networks between nodes. 
Various asynchronous versions of distributed optimization algorithms have been studied in the literature \cite{jakovetic2011cooperative,iutzeler2015explicit,notarnicola2016asynchronous,latafat2018plug,xiao2019dscovr} with a convergence rate guarantee focusing on easy-to-project or linear constraint sets. However, there are far fewer studies considering general nonlinear constraints. In particular, very recently there have been few studies focusing on developing ADMM-based methods for solving \eqref{eq:central_problem} (with possibly nonconvex functions) such as \cite{farina2019distributed,li2020distributed,tang2022fast} where the agent's updates are asynchronous. However, none of these methods present a convergence rate guarantee for their algorithms. 
Next, we outline the contributions of our paper.
\section{Contributions}
We consider a class of distributed optimization subject to agent-specific nonlinear constraints. We propose an accelerated primal-dual algorithm with asynchronous updates where each agent has
to work based on its own clock and does not need to have access to the global clock.
By assuming a composite convex structure on the primal functions and convex constraints, we show that our proposed algorithm converges to an optimal solution at a rate of $\mathcal O(1/K)$ in terms of suboptimality, infeasibility, and consensus violation.
To the best of our knowledge, the proposed scheme is the first asynchronous method with a momentum acceleration that achieves the optimal convergence rate for the considered setting.


{\bf Organization of the paper.} In Section \ref{pre}, we provide the main assumptions and definitions, required for the convergence
analysis. Next, in Sections \ref{method} and \ref{conv rate}, we introduce AD-APD method and show the convergence rate of $\mathcal O(1/K)$ for both suboptimality and infeasibility. Finally, in Section \ref{numeric} we compare the performance of the proposed algorithm with a competitive scheme.  

\section{Preliminaries}\label{pre}
Consider problem \eqref{eq:central_problem}, where $f_i,g_i$ and $\rho_i$ satisfy the following assumption for any $i\in \mathcal N$.
\begin{assumption}\label{assump1}
For each $i\in\cN$, (i) $f_i$ is differentiable on an open set containing $\dom(\rho_i)$ with a Lipschitz continuous gradient $\grad f_i$ and Lipschitz constant $L^f_i$. (ii) $g_i$ is differentiable with Lipschitz continuous Jacobian matrix $\bJ g_i\in\reals^{n\times m_i}$ with constant $L^g_i$. (iii) $\dom(\rho_i)\triangleq \{x\in\reals^n\mid \rho_i<\infty\}$ is bounded. 
\end{assumption}
Before discussing our communication network and  the related assumptions, first, we introduce important notations.

\subsection{Notations} Throughout the paper, $\norm{.}$ denotes the Euclidean norm. 
Given a convex function $f$, let $\mbox{prox}_{f}(w)\triangleq\argmin_v\{f(v)+\frac{1}{2}\norm{v-w}^2\}$ denote the proximal operator of function $f$. 
Let $\bI_n\in\reals^{n\times n}$ denote the identity matrix and $\ones_n\in\reals^n$ denote the vector of ones. Let $\otimes$ denote Kronecker product and $[x]_+\triangleq \max\{0,x\}$. For any matrix $A\in\reals^{n\times m}$, $A_{i:}\in\reals^{1\times m}$ and $A_{:j}\in\reals^{n\times 1}$ denotes $i$-th row and $j$-th column of $A$, respectively, and $a_{ij}$ denotes row $i$ and column $j$ of matrix $A$. For any set of vectors $\{x_i\}_{i\in\cN}\subset\reals^n$, $\bx=[x_i]_{i\in\cN}\in\reals^{nN}$ denotes the concatenation of those vectors. Moreover, for a given set of matrices $A_i\in \reals^{n_i\times m_i}$, for $i\in\cN$, $\diag([A_i]_{i\in\cN})\in\reals^{n\times m}$ denotes a block diagonal matrix whose diagonal blocks are $A_i$'s where $(n,m)=\sum_{i\in\cN}(n_i,m_i)$. 

\begin{remark}\label{rem1}
Based on Assumption \ref{assump1}, the boundedness of the domain implies that for any $i\in\cN$, $g_i$ is a Lipschitz continuous function and we denote the constant with $C_i$.
\end{remark}
Next, we define some notations based on the constants introduced in Assumption \ref{assump1} and Remark \ref{rem1}.
\begin{defn}\label{def1}
Given a set of parameters $\tau_i$, $\gamma_i$, and $\sigma_i$ for $i\in\cN$, let $\cT\triangleq \diag([\frac{1}{\tau_i}\bI_n]_{i\in\cN})$, $\cS\triangleq \diag([\frac{1}{\sigma_i}\bI_{m_i}]_{i\in\cN})$, $\Gamma\triangleq \diag([\frac{1}{\gamma_i}\bI_n]_{i\in\cN})$, and $\cB\triangleq \diag(\cS,\Gamma)$. Moreover, we define $\bC\triangleq \diag([C_i\bI_{m_i}]_{i\in\cN})$, $\bD\triangleq \diag([(C_i+\delta_i)\bI_{n}]_{i\in\cN})$, and $\Delta\triangleq \diag([\delta_i\bI_n]_{i\in\cN})$. 
\end{defn}
\begin{defn}\label{def2}
Let $\varphi(\bx)\triangleq \sum_{i\in\cN}\varphi_i(x_i):\reals^{nN}\to\reals$ and $g(\bx)\triangleq [g_i(x_i)]_{i\in\cN}:\reals^{nN}\to\reals^m$ where $m\triangleq \sum_{i\in\cN}m_i$.
\end{defn}

\subsection{Communication network}
We consider a multi-agent system where the agents combine  their own information state with those received from their neighbors to update their state. Suppose nodes $i$ and $j$ can exchange information only if $(i,j) \in \cE$ or $(j,i) \in \cE$, and each node $i\in\cN$ has a \emph{private} (local) cost function $\varphi_i$ and constraint function $g_i$. The set of neighboring nodes of agent $i$ is denoted by $\cN_i \triangleq \{j\in\cN\mid (i,j)\in \cE \hbox{ or } (j,i)\in\cE\}$. The weighted matrix $W = [w_{ij}] \in \reals^{N\times N}$ is a nonnegative matrix such that $w_{ij} > 0$ if  $j\in \cN_i$ and $w_{ij} = 0$ otherwise. 

We assume that each node $i\in\cN$ has a local clock $t_i\in\reals_+$ and a randomly generated waiting time $T_i$. Each node $i$ will remain \emph{idle} while $\tau_i<T_i$ and switches to the \emph{awake} mode when $\tau_i=T_i$ after which it runs the local computations, resets $t_i=0$ and draws a new realization of the random variable $T_i$. Formally, we make the following assumptions on the communication architecture.
\begin{assumption}\label{assump2}
The waiting times $T_i$ between consecutive  events are i.i.d. random variables with the same exponential distribution. Moreover, only one node can be awake at each time instant.
\end{assumption}

\subsection{Problem Reformulation}
Let $x_i\in\reals^n$ denote the local decision vector of node $i\in\cN$. We can reformulate \eqref{eq:central_problem} as $\min_{\bx}\{\varphi(\bx)\mid g(\bx)\leq 0,~x_i=x_j~\forall(i,j)\in\cE\}$. Furthermore, we can describe the consensus constraint as a linear constraint, i.e., $(V\otimes\bI_n)\bx=\mathbf{0}$ for some $V\in\reals^{N\times N}$. We consider the following condition on the consensus constraint matrix $V$.
\begin{assumption}\label{assum:consensus}
For any $\bx\in\reals^{nN}$, $(V\otimes\bI_n)\bx=\mathbf{0}$ if and only if there exists $x\in\reals^n$ such that $\bx=\ones_N\otimes x$.
Moreover, for any $i\in\cN$, there exists $\delta_i>0$ such that $\norm{V_{i:}}_1\leq \delta_i$. 
\end{assumption}
\begin{remark}
For any mixing matrix $W$ with eigenvalues in $(-1,1]$, e.g., Laplacian-based and Metropolis mixing matrices, let $V=\alpha(\bI_N-W)$ for any $\alpha>0$. It implies that Assumption \ref{assum:consensus} is satisfied with $\delta_i=2\alpha(1-w_{ii})$. 
\end{remark}

Furthermore, we consider the following standard regularity assumption on problem \ref{eq:central_problem}.
\begin{assumption}\label{assum:dual-bound}
The duality gap for \eqref{eq:central_problem} is zero, and a primal-dual solution $(x^*,\by^*)$ to \eqref{eq:central_problem} exists. Moreover, the dual solution is bounded, i.e., $\exists B>0$ such that $\norm{\by^*}\leq B$.
\end{assumption}
\begin{remark}
Note that Assumption \ref{assum:dual-bound} holds in practice under mild conditions as it is studied in \cite{aybat2019distributed}. For instance, when Slater condition holds the agents can collectively compute a Slater point and use it to find a dual bound in a distributed manner.
\end{remark}

Now using Lagrangian duality, we equivalently write the following saddle point formulation. 
\begin{align}\label{eq:SP} \min_{\bx\in\reals^{nN}}\max_{\substack{\by\in\reals^{m}_+\\ \blambda\in\reals^{nN}}}\cL(\bx,\by,\blambda)\triangleq \varphi(\bx)+\fprod{g(\bx),\by}+\fprod{\blambda,\bV\bx}
\end{align}
where $\bV\triangleq V\otimes\bI_n$. Next, we develop a distributed primal-dual method with convergence guarantee for solving \eqref{eq:SP}.

\section{Proposed Method}\label{method}
In this section, we study the asynchronous distributed implementation of the accelerated primal-dual (APD) algorithm to solve \eqref{eq:SP}. We propose an asynchronous distributed accelerated primal-dual (AD-APD) algorithm whose iterations can be computed in a decentralized way, via the node-specific computations as in Algorithm \ref{alg:AD-APD}.
In particular, at each iteration, one agent goes to "awake" mode uniformly at random and updates its local decision variables by taking dual accent steps with momentum accelerations following a proximal-gradient descent using the most updated dual decision variables. Moreover, each agent combines the local information with its neighbors using the consensus constraint matrix. Moreover, our proposed method includes a new linear combination of dual gradient iterates that can recover APD for $N=1$.  

\begin{algorithm}
    \caption{Asynchronous Distributed Accelerated Primal-Dual Algorithm (AD-APD)}
    \label{alg:AD-APD}
    \begin{algorithmic}
    \STATE\hspace*{-3mm}{\bf Input:} $[\tau_i,\sigma_i,\gamma_i]_{i\in\cN}$, $(\bx^0,\by^0,\blambda^0)\in\reals^{nN}\times\reals^m\times\reals^{nN}$\\[2mm]
    \STATE\hspace*{-3mm}For $k\geq 0$,\\[1mm]
    \STATE\hspace*{-3mm}\texttt{IDLE}:\\[1mm]
    \bindent
     \WHILE{ $t_i<T_i$}
        \STATE Do Nothing
     \ENDWHILE
     \STATE Go to \texttt{AWAKE}
      \eindent
      \STATE\hspace*{-3mm}\texttt{AWAKE}:\\[1mm]
      \bindent
      \STATE Receive $\lambda_j^k,x_j^k,x_j^{k-1}$ from neighbors ($j\in\cN_i$)\\[1mm]
      \STATE $y_i^{k+1}\gets\max\big\{\mathbf{0},y_i^k+2N\sigma_i\big(g_i(x_i^k)-\tfrac{(2N-1)}{2N}g_i(x_i^{k-1})\big)\big\}$\\[1mm]
      \STATE $\lambda_i^{k+1}\gets\lambda_i^k+\gamma_i\sum_{j\in\cN_i\cup\{i\}}v_{ij}(2Nx_j^k-(2N-1)x_j^{k-1})$\\[1mm]
      \STATE $x_i^{k+1}\gets \mbox{prox}_{\tau_if_i}\big(x_i^k-\tau_i\big(\bJ g_i(x_i^k)^\top y_i^{k+1}+v_{ii}\lambda_i^{k+1}$\\[1mm] $\qquad\qquad\qquad\qquad+\sum_{j\in\cN_i}v_{ij}\lambda_j^k\big)\big)$\\[1mm]
      \STATE Send $\lambda_i^{k+1},x_i^{k+1},x_i^k$ to neighbors\\[1mm]
      \STATE Set $t_i=0$, get new realization $T_i$ and go to \texttt{IDLE}
      \eindent
     \end{algorithmic}
\end{algorithm}

\section{Convergence Analysis}\label{conv rate}
In the following theorem, we state the convergence rate of AD-APD in terms of the Lagrangian error metric, and then in Corollary \ref{main cor}, the convergence rate in terms of the suboptimality, infeasibility, and consensus violation is shown.

\begin{theorem} \label{main theorem}
    Suppose Assumptions \ref{assump1} - \ref{assum:dual-bound} hold and $\{\bx^k,\by^k,\blambda^k\}_{k\geq 0}$ is the sequence generated by AD-APD stated in Algorithm \ref{alg:AD-APD} with step-sizes selected such that  $\tau_i\leq \tfrac{1}{2(C_i+\delta_i)+L_i^f+B L_i^g}$, $\sigma_i\leq \tfrac{1}{3C_i}$ and $\gamma_i\leq \tfrac{1}{3\delta_i}$. Then it holds for any $(\bx,\by,\blambda)\in\reals^{nN}\times\reals^m_+\times\reals^{nN}$ and $K\geq 1$ that
    \begin{align}
        &\nonumber\mE\big[\cL(\bar\bx^K,\by,\blambda)-\cL(\bx,\bar\by^K,\bar\blambda^K)\big]\\
        &\nonumber\leq \frac{N}{2(K+N-1)}\Big(\norm{\bx^0-\bx}_{\cT+\bD}^2+\norm{\by^0-\by}_{\cS+\bC}^2\\
        &\quad \nonumber+\norm{\blambda^0-\blambda}_{\Gamma+\Delta}^2+\tfrac{N-1}{N}(\cL(\bx^0,\by,\blambda)-\cL(\bx,\by^0,\blambda^0))\Big),
    \end{align}
    where $(\bar\bx^K,\bar\by^K,\bar\blambda^K)\triangleq \frac{1}{K+N-1}\big(\sum\limits_{k=1}^{K-1}(\bx^k,\by^k,\blambda^k)+N(\bx^K,\by^K,\blambda^K)\big)$.
\end{theorem}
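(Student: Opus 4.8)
The plan is to analyze AD-APD as a randomized block-coordinate variant of the accelerated primal-dual (APD) method applied to the saddle-point problem \eqref{eq:SP}, and to track a one-step Lagrangian descent inequality that, after taking conditional expectations over which node wakes up, telescopes into the stated bound. First I would fix an arbitrary reference point $(\bx,\by,\blambda)$ and write the three block updates in Algorithm \ref{alg:AD-APD} as proximal steps: the $y_i$-update is a projected (proximal) ascent step on the concave part $\fprod{g_i(x_i^k),y_i}$ with the extrapolated gradient $g_i(x_i^k)-\tfrac{2N-1}{2N}g_i(x_i^{k-1})$, the $\lambda_i$-update is an unconstrained ascent step on $\fprod{\blambda,\bV\bx}$ with the analogous extrapolation $2Nx_j^k-(2N-1)x_j^{k-1}$, and the $x_i$-update is a prox-gradient descent step on $f_i+\rho_i$ linearized at the current dual iterates. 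For each block I would invoke the standard three-point / prox inequality (using strong convexity of the prox subproblem with modulus coming from $\cT$, $\cS$, $\Gamma$) to get, for the awakened node $i$, an inequality of the form
\begin{align}
\nonumber\varphi_i(x_i^{k+1})-\varphi_i(x_i)+\fprod{g_i(x_i^{k+1}),y_i}-\fprod{g_i(x_i),y_i^{k+1}}+\fprod{\blambda,\bV\bx}\text{-terms}\\
\nonumber\le \tfrac{1}{2}\big(\|x_i^k-x_i\|_{1/\tau_i}^2-\|x_i^{k+1}-x_i\|_{1/\tau_i}^2\big)+(\text{similar }y,\lambda\text{ terms})+(\text{cross terms}).
\end{align}

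Next I would handle the cross terms produced by the extrapolation. The key algebraic device — this is where the specific coefficients $2N$ and $2N-1$ enter — is to rewrite the inner products $\fprod{g_i(x_i^k)-g_i(x_i^{k-1}),\cdot}$ and $\fprod{\bV(x^k-x^{k-1}),\cdot}$ as a difference of a "stage-$k$" and a "stage-$(k+1)$" quantity plus a remainder, so that upon summation over $k$ they telescope, leaving only boundary terms at $k=0$ and $k=K$, together with error terms of the form $\fprod{g_i(x_i^k)-g_i(x_i^{k-1}),y_i^{k+1}-y_i^k}$ and $\fprod{\bV(x^k-x^{k-1}),\blambda^{k+1}-\blambda^k}$. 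Those error terms I would bound using Cauchy–Schwarz together with the Lipschitz constants $C_i$ (for $g_i$, via Remark \ref{rem1}), $L_i^g$ (for the Jacobian), and the $\|V_{i:}\|_1\le\delta_i$ bound from Assumption \ref{assum:consensus}; the step-size conditions $\sigma_i\le 1/(3C_i)$, $\gamma_i\le 1/(3\delta_i)$, and $\tau_i\le 1/(2(C_i+\delta_i)+L_i^f+BL_i^g)$ are precisely what make these remainders absorbable into the negative squared-distance terms $-\|x_i^{k+1}-x_i\|^2$, $-\|y_i^{k+1}-y_i\|^2$, $-\|\lambda_i^{k+1}-\lambda_i\|^2$ coming from the prox steps (the factor $B$ handling the $\|y_i^{k+1}\|$ that multiplies the Jacobian-Lipschitz error uses $\|\by^*\|\le B$ from Assumption \ref{assum:dual-bound}, after restricting $\by$ appropriately or using that the relevant iterate stays near $\by^*$).

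Then comes the randomization step: conditioning on the filtration up to iteration $k$ and using Assumption \ref{assump2} (each node awakens with probability $1/N$, independently), the expected one-step change of the full "block" potential $\Phi_k\triangleq \|\bx^k-\bx\|_{\cT+\bD}^2+\|\by^k-\by\|_{\cS+\bC}^2+\|\blambda^k-\blambda\|_{\Gamma+\Delta}^2$ satisfies a recursion in which the per-node Lagrangian gap appears with weight $1/N$ while the potential differences appear with their natural weights; the extra $\tfrac{N-1}{N}$ factors and the "$N(\bx^K,\by^K,\blambda^K)$" weighting in the definition of $(\bar\bx^K,\bar\by^K,\bar\blambda^K)$ arise exactly from carrying the not-yet-committed parts of the sum through this expectation (the standard bookkeeping for randomized coordinate methods where only a $1/N$ fraction of coordinates moves per step). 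Summing the recursion over $k=0,\dots,K-1$, dropping the nonnegative final potential $\mE[\Phi_K]\ge 0$, using convexity of $\cL(\cdot,\by,\blambda)$ and concavity of $\cL(\bx,\cdot,\cdot)$ together with Jensen's inequality to pass from the running average to $\cL(\bar\bx^K,\by,\blambda)-\cL(\bx,\bar\by^K,\bar\blambda^K)$, and dividing by $K+N-1$ yields the claim.

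The main obstacle I anticipate is the extrapolation-telescoping bookkeeping combined with the asynchronous randomization: one must choose the "stage" quantities so that the telescope is clean after taking expectations, which forces the particular $(2N,2N-1)$ coefficients and the particular momentum form, and one must verify that the cross terms that do not telescope are dominated — uniformly in $k$ — by the prox-generated negative terms under exactly the stated step-size rules. Getting the constants to line up (so that the final bound has the clean factor $N/(2(K+N-1))$ and no leftover positive terms) is the delicate part; everything else is a careful but routine assembly of the standard APD analysis with block-coordinate expectations.
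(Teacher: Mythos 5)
Your plan follows essentially the same route as the paper: per-block prox (three-point) inequalities for the $x$-, $y$-, and $\lambda$-updates, telescoping of the $(2N,2N-1)$-extrapolated cross terms with Young's-inequality remainders absorbed by exactly the stated step-size rules, conditional expectation over the uniformly awakened node, and a Jensen step on the $(1,\dots,1,N)$-weighted average divided by $K+N-1$. The paper implements the expectation bookkeeping you describe only loosely ("carrying the not-yet-committed parts through the expectation") via auxiliary never-computed full-update sequences $\tilde\bx^{k+1},\tilde\by^{k+1},\tilde\blambda^{k+1}$ and the identity $\mE^k[\psi(\bx^{k+1})]=\tfrac1N\psi(\tilde\bx^{k+1})+(1-\tfrac1N)\psi(\bx^k)$, which produces the correction terms $(N-1)(\cH^k-\mE^k[\cH^{k+1}])$ and $\fprod{\bu^k,\bz^k-\bz}$ whose boundary values at $k=0$ and $k=K$ are precisely the source of the $\tfrac{N-1}{N}(\cL(\bx^0,\by,\blambda)-\cL(\bx,\by^0,\blambda^0))$ term and of the weight $N$ on the $K$-th iterate that you anticipated.
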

\begin{proof}
    The proof is presented in section \ref{sec:proof-thm}. 
\end{proof}
\begin{corollary}\label{main cor}
    Under premises of Theorem \ref{main theorem}, for any $K\geq 1$, the following holds.
    \begin{align*}
        &\abs{\varphi(\bar\bx^K)-\varphi(\bx^*)}\leq \cO\left(\tfrac{N}{K+N-1}\right), \nonumber\\
        &\norm{\blambda^*}\norm{V\bar\bx^K}+\sum_{i\in\cN}\norm{y_i^*}\norm{[g_i(\bar\bx^K)]_+}\leq \cO\left(\tfrac{N}{K+N-1}\right).
    \end{align*}
\end{corollary}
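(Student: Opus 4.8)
The plan is to derive Corollary \ref{main cor} from Theorem \ref{main theorem} by making judicious choices of the free dual variables $(\by,\blambda)$ in the Lagrangian error bound and then exploiting the saddle-point optimality conditions together with the boundedness assumptions. The starting point is the observation that $\cL(\bx^*,\by,\blambda) = \varphi(\bx^*)$ for any feasible reference, since $g(\bx^*)\le 0$ forces $\langle g(\bx^*),\by\rangle\le 0$ with equality at $\by=\mathbf 0$, and $\bV\bx^* = \mathbf 0$ by Assumption \ref{assum:consensus}. Hence plugging $\bx=\bx^*$ into the theorem gives an upper bound on $\mE[\cL(\bar\bx^K,\by,\blambda) - \varphi(\bx^*)]$ of order $\cO(N/(K+N-1))$, where the constant depends on $\|\by^0-\by\|^2$ and $\|\blambda^0-\blambda\|^2$ and thus is finite as long as $(\by,\blambda)$ is kept in a bounded set.

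The first substantive step is the \emph{infeasibility/consensus bound}. Write $\cL(\bar\bx^K,\by,\blambda) - \varphi(\bx^*) = \varphi(\bar\bx^K) - \varphi(\bx^*) + \langle g(\bar\bx^K),\by\rangle + \langle \blambda, \bV\bar\bx^K\rangle$. Using weak duality (saddle-point inequality) one has $\varphi(\bar\bx^K) - \varphi(\bx^*) \ge -\langle g(\bar\bx^K),\by^*\rangle - \langle\blambda^*,\bV\bar\bx^K\rangle \ge -\|\by^*\|\,\|[g(\bar\bx^K)]_+\| - \|\blambda^*\|\,\|V\bar\bx^K\|$, so that the left side is lower-bounded by $\langle g(\bar\bx^K)-[g(\bar\bx^K)]_+\cdot(\text{shift}),\by\rangle + \dots$; more cleanly, I would choose $\by = \by^* + \|[g(\bar\bx^K)]_+\|\cdot u$ and $\blambda = \blambda^* + \|V\bar\bx^K\|\cdot w$ for suitable unit vectors $u$ (supported on the positive part of $g$, so that $\langle g(\bar\bx^K),u\rangle = \|[g(\bar\bx^K)]_+\|$) and $w = \bV\bar\bx^K/\|\bV\bar\bx^K\|$, so that the combined inner-product terms collapse to $\|[g(\bar\bx^K)]_+\|^2 + \|V\bar\bx^K\|^2$ plus the saddle terms with $(\by^*,\blambda^*)$. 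Since $\|\by^*\|,\|\blambda^*\|\le B$ and $\|u\|=\|w\|=1$, the reference point stays in a ball of radius $O(B + \|[g(\bar\bx^K)]_+\| + \|V\bar\bx^K\|)$, which after substitution into the quadratic right-hand side of Theorem \ref{main theorem} yields an inequality of the form $\|[g(\bar\bx^K)]_+\|^2 + \|V\bar\bx^K\|^2 \le \cO(N/(K+N-1))\cdot(1 + \|[g(\bar\bx^K)]_+\| + \|V\bar\bx^K\|)$; solving this quadratic gives $\|[g(\bar\bx^K)]_+\| + \|V\bar\bx^K\| = \cO(\sqrt{N/(K+N-1)})$, and then a second, sharper pass — bounding the $\langle g,\by^*\rangle$ and $\langle\blambda^*,\bV\bar\bx\rangle$ cross terms directly — upgrades the weighted infeasibility $\|\blambda^*\|\|V\bar\bx^K\| + \sum_i\|y_i^*\|\|[g_i(\bar\bx^K)]_+\|$ to the claimed rate $\cO(N/(K+N-1))$.

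The second step is the \emph{suboptimality bound}. The upper bound $\varphi(\bar\bx^K) - \varphi(\bx^*)\le \cO(N/(K+N-1))$ follows from taking $\by=\mathbf 0$ and $\blambda=\mathbf 0$ in the already-derived estimate for $\mE[\cL(\bar\bx^K,\by,\blambda)] - \varphi(\bx^*)$, together with convexity (Jensen) to move the expectation inside $\varphi$, and noting that the $-\cL(\bx,\bar\by^K,\bar\blambda^K)$ term with $\bx=\bx^*$ is $\ge -\varphi(\bx^*)$ after dropping the nonpositive $\langle g(\bx^*),\bar\by^K\rangle$ and the vanishing $\langle\bar\blambda^K,\bV\bx^*\rangle$. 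For the matching lower bound, $\varphi(\bar\bx^K) - \varphi(\bx^*)\ge -\|\by^*\|\|[g(\bar\bx^K)]_+\| - \|\blambda^*\|\|V\bar\bx^K\|$ as above, and this is already controlled by the weighted-infeasibility estimate from the first step, giving $|\varphi(\bar\bx^K)-\varphi(\bx^*)|\le\cO(N/(K+N-1))$.

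The main obstacle I anticipate is the bootstrapping in the infeasibility argument: because the reference dual point that linearizes the Lagrangian error depends on the iterate-dependent quantities $\|[g(\bar\bx^K)]_+\|$ and $\|V\bar\bx^K\|$, the right-hand side of Theorem \ref{main theorem} is not a priori a constant — it grows quadratically in exactly the quantities being bounded. Handling this requires the two-stage argument (first a crude $\sqrt{\cdot}$ rate, then feeding it back to get the sharp $1/K$ rate on the \emph{weighted} quantities), and care must be taken that all constants, in particular $B$ and the norms $\|\by^*\|,\|\blambda^*\|$, enter only additively and not multiplicatively with $K$. A secondary technical point is the exchange of expectation with the nonlinear maps $\varphi$ and $[\cdot]_+$, which is handled by Jensen's inequality using convexity of $\varphi$ and of $x\mapsto\|[g(x)]_+\|$ (the latter being a composition of the convex $g$ with the convex nondecreasing norm-of-positive-part), so that $\mE[\cL(\bar\bx^K,\cdot,\cdot)]$ dominates the deterministic quantities evaluated at $\mE[\bar\bx^K]$ — or, if one prefers statements about the random iterate itself, the bounds hold in expectation throughout.
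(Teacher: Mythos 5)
Your overall skeleton (set $\bx=\bx^*$ in Theorem \ref{main theorem}, pick dual reference points aligned with the violation directions, use the saddle-point inequality $\cL(\bar\bx^K,\by^*,\blambda^*)\geq \varphi(\bx^*)$ for the lower bound) is the right one, and it is essentially what the paper does by invoking the standard argument of Corollary 4.2 in \cite{hamedani2021}. The genuine gap is in your choice of the dual reference magnitudes and the ``second, sharper pass'' that is supposed to rescue it. By taking $\by=\by^*+\norm{[g(\bar\bx^K)]_+}u$ and $\blambda=\blambda^*+\norm{V\bar\bx^K}w$, the term you gain on the left-hand side after invoking the saddle-point inequality is $\norm{[g(\bar\bx^K)]_+}^2+\norm{V\bar\bx^K}^2$, while the right-hand side of the theorem is a constant (or, with your iterate-dependent radius, constant plus the same squared quantities). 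This can only ever yield $\norm{[g(\bar\bx^K)]_+}+\norm{V\bar\bx^K}\leq\cO\big(\sqrt{N/(K+N-1)}\big)$; feeding this crude bound back in just reproduces the same squared inequality and does not upgrade the rate. Moreover, the step you describe as ``bounding the cross terms $\fprod{g(\bar\bx^K),\by^*}$ and $\fprod{\blambda^*,\bV\bar\bx^K}$ directly'' is not available: $\fprod{g(\bar\bx^K),\by^*}$ admits no lower bound in terms of $\norm{[g(\bar\bx^K)]_+}$ (it can be strongly negative on the feasible components of $g$), so the only usable control is through the saddle-point inequality, which is exactly what you already spent. Consequently your argument proves the infeasibility/consensus bounds, and hence the lower suboptimality bound, only at the rate $\cO(\sqrt{N/(K+N-1)})$, not the claimed $\cO(N/(K+N-1))$.

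The standard fix, which is the route of the cited corollary and requires no bootstrapping, is to keep the reference duals at a \emph{constant} magnitude tied to $B$: take $\tilde y_i=2\norm{y_i^*}\,[g_i(\bar\bx^K)]_+/\norm{[g_i(\bar\bx^K)]_+}$ (zero if the positive part vanishes) and $\tilde\blambda=2\norm{\blambda^*}\,\bV\bar\bx^K/\norm{\bV\bar\bx^K}$, so that $\norm{\tilde\by},\norm{\tilde\blambda}\leq 2B$ and the theorem's right-hand side is a constant times $N/(K+N-1)$. Then the left-hand side gives $\varphi(\bar\bx^K)-\varphi(\bx^*)+2\sum_{i\in\cN}\norm{y_i^*}\norm{[g_i(\bar\bx^K)]_+}+2\norm{\blambda^*}\norm{V\bar\bx^K}\leq\cO(N/(K+N-1))$, and adding the saddle-point lower bound $\varphi(\bar\bx^K)-\varphi(\bx^*)\geq-\sum_i\norm{y_i^*}\norm{[g_i(\bar\bx^K)]_+}-\norm{\blambda^*}\norm{V\bar\bx^K}$ leaves a positive multiple of the weighted infeasibility on the left, giving both claims in one pass. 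One further technical point you wave away: the reference $(\by,\blambda)$ in Theorem \ref{main theorem} is deterministic while $\tilde\by,\tilde\blambda$ depend on the random $\bar\bx^K$; the standard treatment establishes the bound uniformly (via a supremum over the ball of radius $2B$) before taking expectations, which is how the cited proof handles it. Your Jensen step for $\varphi$ is unnecessary since the theorem already bounds the gap at the averaged random iterate in expectation.
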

\begin{proof}
The proof follows the same steps as in \cite[Corollary 4.2.]{hamedani2021}.
\end{proof}
Before proving Theorem \ref{main theorem}, we state a standard technical lemma for the proximal gradient step which is a trivial extension of Property 1 in \cite{tseng2008accelerated}. Then we provide
a one-step analysis of the algorithm in Lemma \ref{lem:one-step}.
\begin{lemma}
\label{lem_app:prox}
Let $f:\mathbb R^n\rightarrow\reals$ be a closed convex function. Given $\bar{x}\in\dom f$ and $t>0$, let
\begin{align*}
x^+=\argmin_{x\in\reals^n} f(x)+\tfrac{t}{2}\|x-\bar{x}\|^2.
\end{align*}
Then for all $x\in\reals^n$, the following inequality holds:
\begin{eqnarray*}
f(x)+\tfrac{t}{2}\|x-\bar{x}\|^2\geq f(x^+) + \tfrac{t}{2}\|x^+-\bar{x}\|^2+\tfrac{t}{2}\|x-x^+\|^2. 
\end{eqnarray*}
\end{lemma}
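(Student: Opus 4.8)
The final statement to prove is Lemma~\ref{lem_app:prox}, the standard prox inequality. Let me sketch a proof.

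The plan is to use the first-order optimality condition for the minimizer $x^+$ together with a "three-point" (or "completing the square") identity. First I would write down the optimality condition: since $x^+$ minimizes the closed convex function $h(x) \triangleq f(x) + \tfrac{t}{2}\|x - \bar x\|^2$, there exists a subgradient $s \in \partial f(x^+)$ such that $s + t(x^+ - \bar x) = 0$, i.e., $-t(x^+ - \bar x) \in \partial f(x^+)$. By the subgradient inequality for the convex function $f$, for all $x \in \reals^n$ we have $f(x) \geq f(x^+) + \langle s, x - x^+\rangle = f(x^+) - t\langle x^+ - \bar x, x - x^+\rangle$.

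Next I would handle the quadratic terms via the elementary identity $\tfrac{t}{2}\|x - \bar x\|^2 = \tfrac{t}{2}\|x^+ - \bar x\|^2 + t\langle x^+ - \bar x, x - x^+\rangle + \tfrac{t}{2}\|x - x^+\|^2$, which is just the expansion of $\|x - \bar x\|^2 = \|(x - x^+) + (x^+ - \bar x)\|^2$. Adding this identity to the subgradient inequality above, the cross terms $\pm t\langle x^+ - \bar x, x - x^+\rangle$ cancel exactly, yielding
\[
f(x) + \tfrac{t}{2}\|x - \bar x\|^2 \;\geq\; f(x^+) + \tfrac{t}{2}\|x^+ - \bar x\|^2 + \tfrac{t}{2}\|x - x^+\|^2,
\]
which is the claimed inequality.

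This proof is entirely routine — there is no real obstacle. The only mild subtlety is the existence of the subgradient $s$ satisfying the optimality condition, which holds because $h$ is a proper closed convex function with a (strongly convex, hence unique) minimizer, so $0 \in \partial h(x^+)$ and the sum rule for subdifferentials applies since the quadratic term is finite everywhere. One could alternatively avoid subgradients entirely and argue directly from the strong convexity of $h$: $h$ is $t$-strongly convex, so $h(x) \geq h(x^+) + \tfrac{t}{2}\|x - x^+\|^2$ for all $x$ (using $0 \in \partial h(x^+)$), which is exactly the statement after rearranging. I would present the strong-convexity version as it is the shortest, and simply cite that it is a trivial extension of Property~1 in \cite{tseng2008accelerated} as the authors indicate.
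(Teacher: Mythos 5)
Your proof is correct. The paper itself gives no argument for Lemma~\ref{lem_app:prox}, simply deferring to Property~1 of \cite{tseng2008accelerated}, and your derivation (optimality condition $-t(x^+-\bar x)\in\partial f(x^+)$ plus the three-point expansion of $\|x-\bar x\|^2$, or equivalently the $t$-strong convexity of $f(\cdot)+\tfrac{t}{2}\|\cdot-\bar x\|^2$) is exactly the standard argument behind that citation, with the subdifferential sum rule correctly noted as the only technical point.
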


Before we proceed, we define some notations to facilitate the proof. 
\begin{defn}
Let function $\Phi$ be the smooth part of the objective function in \eqref{eq:SP}, i.e., $\Phi_i(x_i,y_i,\blambda)\triangleq f_i(x_i)+\fprod{g_i(x_i),y_i}+\fprod{(V_{i:}\otimes\bI_n)\blambda,x_i}$ and $\Phi(\bx,\by,\blambda)\triangleq \sum_{i\in\cN}\Phi_i(x_i,y_i,\blambda)$. Moreover, we define $\bz\triangleq [\by^\top~\blambda^\top]^\top$. 
\end{defn}

\begin{lemma}\label{lem:one-step}
Let $\{\bx^k,\by^k,\blambda^k\}_{k\geq 0}$ be the sequence generated by AD-APD, stated in Algorithm \ref{alg:AD-APD}. Suppose Assumptions \ref{assump1} and \ref{assump2} hold and $\cT$ and $\cB$ are defined in Definition \ref{def1}. Let $\bz\triangleq [\by^\top,\blambda^\top]^\top$, then for any $(\bx,\by,\blambda)\in\reals^{nN}\times\reals^m_+\times\reals^{nN}$,
\begin{align}\label{eq:one-step}
&\mE^k[\cL(\bx^{k+1},\by,\blambda)-\cL(\bx,\by^{k+1},\blambda^{k+1})]\leq (N-1)\big(\cH^k\nonumber\\
&-\mE^k[\cH^{k+1}]\big)+\fprod{\bu^k,\bz^k-\bz}-\mE^k[\fprod{\bu^{k+1},\bz^{k+1}-\bz}]\nonumber\\
& +\tfrac{N}{2}\mE^k\big[\norm{\bx-\bx^k}_{\cT}^2-\norm{\bx- \bx^{k+1}}_{\cT}^2-\norm{\bx^k-\bx^{k+1}}_{\cT-\bL^\Phi}^2\nonumber\\
&+\norm{\bx^{k-1}-\bx^k}_{2\bD}^2\big]+\tfrac{N}{2}\mE^k\big[\norm{\bz-\bz^k}_{\cB}^2\nonumber\\
&-\norm{\bz-\bz^{k+1}}_{\cB}^2-\norm{\bz^k- \bz^{k+1}}_{\widetilde\cB}^2\big],
\end{align}
where $\cH^k\triangleq \rho(\bx^k)+\Phi(\bx^k,\by^k,\blambda^k)$, $\widetilde\cB\triangleq \cB-2\bC_2/N$,  
$\bD\triangleq \diag([(C_i+\delta_i)\bI_n]_{i\in\cN})$, $\bC_2\triangleq \diag([C_i\bI_{m_i}]_{i\in\cN},[\delta_i\bI_{n}]_{i\in\cN})$, and $\bu^k\triangleq \grad_\bz\Phi(\bx^k,\by^k,\blambda^k)-(2N-1)\grad_\bz\Phi(\bx^{k-1},\by^{k-1},\blambda^{k-1})$.
\end{lemma}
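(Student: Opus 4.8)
The plan is to establish the one-step inequality \eqref{eq:one-step} by analyzing the three primal-dual updates of AD-APD separately and then combining them, mirroring the standard accelerated primal-dual (APD) analysis but carefully accounting for the asynchronous block-coordinate structure. \textbf{First}, I would condition on the history up to iteration $k$ and observe that, since exactly one node $i$ is awoken uniformly at random, $\mathbb E^k[\,\cdot\,]$ applied to any node-separable quantity that changes only at the active block produces a factor $1/N$; conversely, to convert a per-node update into a statement about the full vector, one multiplies by $N$, which is the source of the $N/2$ and $(N-1)$ coefficients appearing throughout. I would set up the ``full-update'' surrogate iterates (the hypothetical iterates one would get if all nodes updated) and relate them to the actual asynchronous iterates via this expectation identity.

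\textbf{Second}, for the dual blocks $y_i$ and $\lambda_i$: these are simple (projected) gradient-ascent steps on the concave-in-$\bz$ function $\fprod{g(\bx),\by}+\fprod{\blambda,\bV\bx}$ with the momentum/extrapolation term built into the argument (the $2N x_j^k-(2N-1)x_j^{k-1}$ and analogous $g_i$ combination). Applying Lemma \ref{lem_app:prox} to the $y$-update (with $f$ the indicator of $\reals^{m_i}_+$) and completing the square for the linear $\lambda$-update yields, after stacking into $\bz$, the terms $\tfrac N2(\norm{\bz-\bz^k}_\cB^2-\norm{\bz-\bz^{k+1}}_\cB^2-\norm{\bz^k-\bz^{k+1}}_{\widetilde\cB}^2)$ together with an inner-product remainder involving $\grad_\bz\Phi$ evaluated at $\bx^k$ and $\bx^{k-1}$. \textbf{Third}, for the $x_i$-update I would apply Lemma \ref{lem_app:prox} to $f=\tau_i f_i+\tau_i\rho_i$ (more precisely, handle $\rho_i$ via the $\cH^k$ bookkeeping and $f_i$ via its $L_i^f$-smoothness), producing the $\cT$-weighted three-point terms and a descent term $-\norm{\bx^k-\bx^{k+1}}^2_{\cT-\bL^\Phi}$; the curvature of $\Phi$ in $\bx$ (coming from the bilinear couplings with $\by$ and $\blambda$, bounded using the Lipschitz constants $C_i$, $\delta_i$ and the dual bound $B$) is what forces $\bL^\Phi$ and also generates the backward term $\norm{\bx^{k-1}-\bx^k}^2_{2\bD}$ when the extrapolated dual-gradient combination is re-expressed.

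\textbf{Fourth}, I would collect the cross terms. The key algebraic device is the ``telescoping with momentum'' identity: writing $\bu^k=\grad_\bz\Phi(\bx^k,\by^k,\blambda^k)-(2N-1)\grad_\bz\Phi(\bx^{k-1},\by^{k-1},\blambda^{k-1})$, the inner-product remainders from the dual step reorganize into $\fprod{\bu^k,\bz^k-\bz}-\mathbb E^k[\fprod{\bu^{k+1},\bz^{k+1}-\bz}]$ plus a residual controlled by the smoothness of $g$ (Assumption \ref{assump1}(ii)) and the boundedness of $\dom\rho_i$; this residual is precisely what the backward-difference term $\norm{\bx^{k-1}-\bx^k}^2_{2\bD}$ and the deficit $\widetilde\cB=\cB-2\bC_2/N$ absorb. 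The function $\cH^k=\rho(\bx^k)+\Phi(\bx^k,\by^k,\blambda^k)$ plays the role of the ``extra'' term that telescopes with coefficient $N-1$, reflecting that in expectation only a $1/N$ fraction of the Lagrangian decrease is realized per step. \textbf{The main obstacle} I anticipate is bounding this momentum residual cleanly in the asynchronous setting: because the active node changes randomly, $\grad_\bz\Phi(\bx^{k-1},\cdot,\cdot)$ and $\grad_\bz\Phi(\bx^k,\cdot,\cdot)$ differ in coordinates that were not necessarily updated at step $k$, so one must track which blocks of $\bx$, $\by$, $\blambda$ are ``stale'' and show the resulting error is still dominated by $\tfrac N2\norm{\bx^{k-1}-\bx^k}^2_{2\bD}$ after taking conditional expectation — this is where the specific momentum weights $2N$ and $2N-1$ in the algorithm, rather than the classical $\theta=1$, are essential, and verifying the sign/magnitude of every weighted-norm coefficient (so that $\cT-\bL^\Phi\succeq0$ and $\widetilde\cB\succeq0$ under the stated step-size rules) is the delicate bookkeeping step.
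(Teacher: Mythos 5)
Your plan follows essentially the same route as the paper's proof: introduce never-computed full-update surrogate iterates, apply Lemma \ref{lem_app:prox} blockwise to the $x$, $y$, and $\lambda$ updates, pass to the asynchronous iterates via the identity $\mE^k[\psi(\bx^{k+1})]=\tfrac1N\psi(\tilde\bx^{k+1})+(1-\tfrac1N)\psi(\bx^k)$ (the source of the $(N-1)\cH^k$ telescoping), reorganize the momentum cross terms into $\fprod{\bu^k,\bz^k-\bz}-\mE^k[\fprod{\bu^{k+1},\bz^{k+1}-\bz}]$, and absorb the residual by Young's inequality into the $2\bD$ and $\cB-2\bC_2/N$ terms, including the staleness accounting (the active block of $\bq^k$ is re-updated only with probability $1/N^2$). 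One minor misattribution: $\bL^\Phi$ comes from $L^f_i+BL^g_i$ (smoothness of $f_i$ plus the Lipschitz Jacobian of $g_i$ scaled by the dual bound), whereas $C_i$ and $\delta_i$ enter only through $\bD$ and $\bC_2$, and the Young's-inequality residual uses the Lipschitz continuity of $g_i$ (Remark \ref{rem1}) and $\|V_{i:}\|_1\le\delta_i$ rather than Assumption \ref{assump1}(ii).
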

\proof
We begin the proof by defining auxiliary sequences $\{\tilde\bx^k,\tilde\blambda^k\}_{k\geq 1}\subseteq\reals^{nN}$ and $\{\tilde\by^k\}_{k\geq 1}\subseteq\reals^m$ representing centralized updates and compare them with the sequences generated by the proposed algorithm. Note that these auxiliary sequences are never actually computed in the implementation of the algorithm. In particular, we define the following for all $i\in\cN$
\begin{align}
&\tilde{y}_i^{k+1} \triangleq \max\{\mathbf{0},y_i^k+\sigma_i(\grad_{y_i}\Phi_i(x_i^k,y_i^k,\blambda^k)+s_i^k)\},
\label{eq:tilde-problem-y}\\
&\tilde\lambda_i^{k+1} \triangleq \lambda_i^k+\gamma_i(\grad_{\lambda_i}\Phi(\bx^k,\by^k,\blambda^k)+r_i^k),
\label{eq:tilde-problem-lam} \\
&\tilde{x}_i^{k+1} \triangleq \prox{\tau_i\rho_i}(x_i^k-\tau_i\grad_{x_i}\Phi_i(x_i^k,y_i^{k+1},\blambda^{k+1})),
\label{eq:tilde-problem-x}
\end{align}
where $s_i^k\triangleq (2N-1)(\grad_{y_i}\Phi_i(x_i^k,y_i^k,\blambda^k)-\grad_{y_i}\Phi_i(x_i^{k-1},y_i^{k-1},\blambda^{k-1}))$ and $r_i^k\triangleq (2N-1)(\grad_{\lambda_i}\Phi(\bx^k,\by^k,\blambda^k)-\grad_{\lambda_i}\Phi(\bx^{k-1},\by^{k-1},\blambda^{k-1}))$. Applying Lemma~\ref{lem_app:prox} on  \eqref{eq:tilde-problem-x} implies that
\begin{align}\label{eq:x-ineq}
&\rho_i(\tilde x_i^{k+1})-\rho_i(x_i)\leq \fprod{\grad_{x_i}\Phi_i(x_i^k,y_i^{k+1},\blambda^{k+1}),x_i-\tilde x_i^{k+1}} \nonumber\\
&+\tfrac{1}{2\tau_i}\big[\norm{x_i-x_i^k}^2-\norm{x_i-\tilde x_i^{k+1}}^2-\norm{x_i^k-\tilde x_i^{k+1}}^2\big].
\end{align}
Using Lipschitz continuity of $\grad f_i$ and $\bJ g_i$ and boundedness of sequence $\{\by^k\}_{k\geq 0}$ we conclude that $\grad_{x_i}\Phi_i(x_i^k,y_i^{k+1},\blambda^{k+1})$ is Lipschitz continuous with constant $L^\Phi_i= L^f_i+BL^g_i$. Therefore, 
\begin{align}\label{eq:Lip-x}
&\fprod{\grad_{x_i}\Phi_i(x_i^k,y_i^{k+1},\blambda^{k+1}),x_i-\tilde x_i^{k+1}}\leq \Phi_i(x_i,y_i^{k+1},\blambda^{k+1})\nonumber\\
&\quad -\Phi_i(\tilde x_i^{k+1},y_i^{k+1},\blambda^{k+1})+\tfrac{L^\Phi_i}{2}\norm{\tilde x_i^{k+1}-x_i^k}^2.
\end{align}
Combining \eqref{eq:x-ineq} and \eqref{eq:Lip-x}, and summing over $i\in\cN$ we obtain
\begin{align*}
&\rho(\tilde\bx^{k+1})-\rho(\bx)\leq \Phi(\bx,\by^{k+1},\blambda^{k+1})-\Phi(\tilde\bx^{k+1},\by^{k+1},\blambda^{k+1})\nonumber\\
&+\tfrac{1}{2}\big[\norm{\bx-\bx^k}_{\cT}^2-\norm{\bx-\tilde \bx^{k+1}}_{\cT}^2-\norm{\bx^k-\tilde \bx^{k+1}}_{\cT-\bL^\Phi}^2\big],
\end{align*}
where $\bL^\Phi\triangleq \diag([L_i^\Phi\bI_n]_{i\in\cN})$. Note that at each iteration of the algorithm only one agent is awake, i.e., one component of each decision variable is updated, therefore, for any function $\psi:\reals^{nN}\to\reals$ we have $\mE^k[\psi(\bx^{k+1})]=\frac{1}{N}\psi(\tilde\bx^{k+1})+(1-\frac{1}{N})\psi(\bx^k)$ and one can deduce similar results for $\by^{k+1}$ and $\blambda^{k+1}$. Now using this fact and concavity and smoothness of $\Phi(\bx,\cdot,\cdot)$ from the last inequality we can conclude that
\begin{align}\label{eq:x-ineq-one-step}
&\mE^k[\rho(\bx^{k+1})+\Phi(\bx^{k+1},\by,\blambda)-\rho(\bx)-\Phi(\bx,\by^{k+1},\blambda^{k+1})]\leq \nonumber\\
& (N-1)(\rho(\bx^k)+\Phi(\bx^k,\by^k,\blambda^k)-\mE^k[\rho(\bx^{k+1})\nonumber\\&+\Phi(\bx^{k+1},\by^{k+1},\blambda^{k+1})])\nonumber\\
&+\mE^k\left[\fprod{\grad_{\bz}\Phi(\bx^{k+1},\by^{k+1},\blambda^{k+1}),\bz-\bz^{k+1}}\right]+A_1^k\nonumber\\
&+ (N-1)\mE^k\left[\fprod{\grad_\bz\Phi(\bx^k,\by^k,\blambda^k),\bz^{k+1}-\bz^k}\right],
\end{align}
where $A_1^k\triangleq \tfrac{1}{2}\big[\norm{\bx-\bx^k}_{\cT}^2-\norm{\bx-\tilde \bx^{k+1}}_{\cT}^2-\norm{\bx^k-\tilde \bx^{k+1}}_{\cT-\bL^\Phi}^2\big]$. 

Using a similar argument as in \eqref{eq:x-ineq}, one can obtain the following inequalities for the updates in \eqref{eq:tilde-problem-y} and \eqref{eq:tilde-problem-lam}, respectively. Indeed for any $y_i\in\reals^{m_i}_+$,
\begin{align}
&0 \leq \fprod{\grad_{y_i}\Phi_i(x_i^k,y_i^k,\blambda^k)+s_i^k,\tilde y_i^{k+1}-y_i}\nonumber\\
&+\tfrac{1}{2\sigma_i}\big[\norm{y_i-y_i^k}^2-\norm{y_i-\tilde y_i^{k+1}}^2-\norm{\tilde y_i^{k+1}-y_i^k}^2\big] \label{eq:y-prox}\\
&0=\fprod{\grad_{\lambda_i}\Phi(\bx^k,\by^k,\blambda^k)+r_i^k,\tilde \lambda_i^{k+1}-\lambda_i}+\tfrac{1}{2\gamma_i}\big[\|{\lambda_i-\lambda_i^k}\|^2\nonumber\\
&-\|{\lambda_i-\tilde\lambda_i^{k+1}}\|^2-\|{\tilde\lambda_i^{k+1}-\lambda_i^k}\|^2\big]\label{eq:lam-prox}.
\end{align}

Next, we sum \eqref{eq:y-prox} and \eqref{eq:lam-prox} over $i\in\cN$ and add the resulting inequality to \eqref{eq:x-ineq-one-step}. Then, using definition of $\cL(\bx,\by,\blambda)$ and $\cH^k$, and that $\mE^k[h(\by^{k+1})]=\frac{1}{N}h(\tilde\by^{k+1})+(1-\frac{1}{N})h(\by^k)$, we obtain
\begin{align}\label{eq:sum-L}
&\mE^k[\cL(\bx^{k+1},\by,\blambda)-\cL(\bx,\by^{k+1},\blambda^{k+1})]\leq (N-1)[\cH^k\nonumber\\
&-\mE^k[\cH^{k+1}]]+\fprod{\grad_{\bz}\Phi(\bx^{k+1},\by^{k+1},\blambda^{k+1}),\bz-\bz^{k+1}}\nonumber\\
&+\fprod{\grad_{\bz}\Phi(\bx^k,\by^k,\blambda^k)+\bq^k,\tilde\bz^{k+1}-\bz}+A_1^k+A_2^k+A_3^k\nonumber\\
&+(N-1) \mE^k\left[\fprod{\grad_\bz\Phi(\bx^k,\by^k,\blambda^k),\bz^{k+1}-\bz^k}\right],
\end{align}
where $\bq^k\triangleq [{\bs^k}^\top,{\br^k}^\top]^\top$. Moreover, $A_2^k, A_3^k$ are defined similar to $A_1^k$ as the sum of terms containing norm squares in \eqref{eq:y-prox} and \eqref{eq:lam-prox} over $i\in\cN$, respectively. To simplify the notation, we will use $\grad_\bz\Phi^k\triangleq \grad_\bz\Phi(\bx^k,\by^k,\blambda^k)$. One can easily observe that $\bq^k=(2N-1)(\grad_\bz\Phi^k-\grad_\bz\Phi^{k-1})$. Next, we deal with the two inner product terms in \eqref{eq:sum-L} as follows.
\begin{align}\label{eq:inner-telescop}
&\mE^k\Big[\fprod{\grad_{\bz}\Phi^{k+1},\bz-\bz^{k+1}}+\fprod{\grad_\bz\Phi^k+\bq^k,\tilde\bz^{k+1}-\bz}\nonumber\\
&\quad +(N-1) \fprod{\grad_\bz\Phi^k,\bz^{k+1}-\bz^k}\Big]\nonumber\\
&=\mE^k\Big[\fprod{\grad_{\bz}\Phi^{k+1}-(2N-1)\grad_\bz\Phi^k,\bz-\bz^{k+1}}\Big]\nonumber\\
&\quad +\fprod{\bq^k-2(N-1)\grad_{\bz}\Phi^k,\bz^k-\bz}+N\mE^k[\fprod{\bq^k,\bz^{k+1}-\bz^k}]\nonumber\\
&=\fprod{\bu^k,\bz^k-\bz}-\mE^k[\fprod{\bu^{k+1},\bz^{k+1}-\bz}]\nonumber\\
&\quad+N\mE^k[\fprod{\bq^k,\bz^{k+1}-\bz^k}],
\end{align}
where $\bu^k= \bq^k-2(N-1)\grad_\bz\Phi^k$. Note that at each iteration only one agent's decision variables are updated, hence, $\bq^k=q_i^k$ for $i=i_{k-1}$. Therefore, using Young's inequality and Lipschitz continuity of $g_i$ and $\norm{V_{i:}}\leq\delta_i$ we conclude that for $i=i_{k-1}$,
\begin{align*}
&N\fprod{\bq^k,\bz^{k+1}-\bz^k}=N\fprod{q_i^k,z_i^{k+1}-z_i^k}\nonumber\\
&= N(2N-1)\big(\fprod{g_i(x_i^k)-g_i(x_i^{k-1}),y_i^k-y_i^{k-1}}\nonumber\\
&\quad +\fprod{V_{i:}(x_i^k-x_i^{k-1}),\lambda_i^{k+1}-\lambda_i^k}\big)\nonumber\\
&\leq \tfrac{N(2N-1)}{2}\Big((C_i+\delta_i)\norm{x_i^k-x_i^{k-1}}^2 + C_i\norm{y_i^{k+1}-y_i^k}^2\nonumber\\
&\quad +\delta_i\norm{\lambda_i^{k+1}-\lambda_i^k}^2\Big).
\end{align*}
Note that at iteration $k$ only $z^{k+1}_{i_k}$ is updated where $i_k$ is chosen with probability $1/N$. Hence, one can readily observe that $z^{k+1}_{i_{k-1}}\neq z^k_{i_{k-1}}$ with probability $1/N^2$ and $z^{k+1}_{i_{k-1}}= z^k_{i_{k-1}}$ otherwise. 
Therefore, taking conditional expectation from the above inequality imply that 
\begin{align}\label{eq:inner-prod}
&N\mE^k[\fprod{\bq^k,\bz^{k+1}-\bz^k}]\leq \tfrac{2N-1}{2}\mE^k\big[\norm{\bx^k-\bx^{k-1}}_{\bD}^2\big]\nonumber\\
&\quad +\tfrac{2N-1}{2N}\mE^k\big[\norm{\bz^{k+1}-\bz^k}_{\bC_2}^2\big].
\end{align}
Finally, combining \eqref{eq:inner-prod} and \eqref{eq:inner-telescop} with \eqref{eq:sum-L}, and using the fact that $\norm{\tilde\bx^{k+1}-\bx^k}^2=N\mE^k[\norm{\bx^{k+1}-\bx^k}^2]$ the result can be concluded. \qed

\subsection{Proof of Theorem \ref{main theorem}}\label{sec:proof-thm}
Now we are ready to prove the main result. Consider the inequality obtained in \eqref{eq:one-step}. Taking expectations from both sides, using the step-size selection implying $\widetilde\cB\succeq 0$ and $\widetilde\cT\succeq 0$, summing the resulting inequality from $k=0$ to $K-1$ and dividing by $K$ we obtain 
\begin{align}\label{eq:sum-over-k}
&\frac{1}{K}\sum_{k=0}^{K-1}\mE[\cL(\bx^{k+1},\by,\blambda)-\cL(\bx,\by^{k+1},\blambda^{k+1})]\leq \nonumber\\
&(N-1)\big(\cH^0-\mE[\cH^K]\big)+\fprod{\bu^0,\bz^0-\bz}-\mE[\fprod{\bu^{K},\bz^{K}-\bz}]\nonumber\\
& +\tfrac{N}{2}\mE\big[\norm{\bx-\bx^0}_{\cT}^2-\norm{\bx- \bx^{K}}_{\cT}^2-\norm{\bx^{K-1}-\bx^K}^2_{\cT-\bL^\Phi}\big]\nonumber\\
&+\tfrac{N}{2}\mE\big[\norm{\bz-\bz^0}_{\cB}^2-\norm{\bz-\bz^{K}}_{\cB}^2\big].
\end{align}
We notice that function $\Phi(\bx,\by,\blambda)$ is linear in $(\by,\blambda)$, i.e., $\grad_\bz\Phi(\bx,\by,\blambda)=\grad_\bz\Phi(\bx,\bar\by,\bar\blambda)$ for any $\by,\bar\by,\blambda,\bar\blambda$; therefore, we can show the following relations for any $k\geq 0$,
\begin{align}\label{eq:Hk-equality}
&(N-1)(\cH^k-\cL(\bx,\by,\blambda))+\fprod{\bu^k,\bz^k-\bz}\nonumber\\
&=(N-1)\big(\cL(\bx^k,\by,\blambda) -\cL(\bx,\by,\blambda)\big)+\fprod{\bq^k,\bz^k-\bz}\nonumber\\
&\quad -(N-1)\fprod{\grad_\bz\Phi^k,\bz^k-\bz}\nonumber\\
&=(N-1)\big(\cL(\bx^k,\by,\blambda) -\cL(\bx,\by,\blambda)\big)+\fprod{\bq^k,\bz^k-\bz} \nonumber\\
&\quad -(N-1)\fprod{\grad_\bz\Phi^k,\bz^k-\bz}\nonumber\\
&\quad \pm(N-1) \fprod{\grad_\bz\Phi(\bx,\by^k,\blambda^k),\bz^k-\bz}\nonumber\\
&= (N-1)\big(\cL(\bx^k,\by,\blambda) -\cL(\bx,\by^k,\blambda^k)\big)+\fprod{\bq^k,\bz^k-\bz} \nonumber\\
&\quad +(N-1)\fprod{\grad_\bz\Phi(\bx,\by^k,\blambda^k)-\grad_\bz\Phi^k,\bz^k-\bz}.
\end{align}
Next, we provide upper bounds for the two inner products on the right-hand side of \eqref{eq:Hk-equality} similar to \eqref{eq:inner-prod}.
\begin{align*}
&|\fprod{\bq^k,\bz^k-\bz}|\leq \tfrac{2N-1}{2}\big(\norm{\bx^k-\bx^{k-1}}_{\bD}^2+\norm{\bz^k-\bz}_{\bC_2}^2\big)\\
& (N-1)\Big|\fprod{\grad_\bz\Phi(\bx,\by^k,\blambda^k)-\grad_\bz\Phi^k,\bz^k-\bz}\Big|\leq\nonumber\\
&\quad \tfrac{N-1}{2}\big(\norm{\bx^k-\bx}_{\bD}^2+\norm{\bz^k-\bz}_{\bC_2}^2\big).
\end{align*}
Now, with the help of above inequalities in \eqref{eq:Hk-equality} once for $k=0$ and once for $k=K$, the fact that $\bq^0=0$, and using the resulting inequality within \eqref{eq:sum-over-k} we obtain 
\begin{align*}
&\frac{1}{K}\sum_{k=0}^{K-1}\mE[\cL(\bx^{k+1},\by,\blambda)-\cL(\bx,\by^{k+1},\blambda^{k+1})] \nonumber\\
&\leq (N-1)(\cL(\bx^{0},\by,\blambda)-\cL(\bx,\by^{0},\blambda^{0}))\nonumber\\
&-(N-1)(\cL(\bx^{K},\by,\blambda)-\cL(\bx,\by^{K},\blambda^{K}))\nonumber\\
& +\tfrac{N}{2}\mE\big[\norm{\bx-\bx^0}_{\cT+\bD}^2-\norm{\bx- \bx^{K}}_{\cT-\bD}^2-\norm{\bx^{K-1}-\bx^K}^2_{\widetilde\cT}\big]\nonumber\\
&+\tfrac{N}{2}\mE\big[\norm{\bz-\bz^0}_{\cB+\bC_2}^2-\norm{\bz-\bz^{K}}_{\cB-3\bC_2}^2\big],
\end{align*}
where $\widetilde\cT= \cT-\bL^\Phi-2\bD$. Finally, rearranging the terms in the aforementioned inequality and dropping the negative terms due to the step-size selection lead to the desired result. \qed

\section{Numerical Experiments}\label{numeric}
In this section, we consider a distributed localization problem to test the performance of our proposed algorithm. Given a set of local ellipsoids $\cX_i\triangleq \{x\in[-1,1]^n\mid \norm{A_ix-b_i}\leq \eta_i\}$, for $i\in\cN$, where $A_i\in\reals^{p_i\times n}$, $b_i\in\reals^{p_i}$, and $\eta_i>0$, the goal is to solve the following optimization problem:
$\min_{x\in\reals^n} \{\sum_{i\in\cN} f_i(x) 
\mid x\in\bigcap_{i=1}^N\cX_i\}$, 
over a network $\cG=(\cN,\cE)$. 
To highlight the benefit of our method, we compare ours with a synchronous distributed primal-dual method (DPDA-S) in \cite{hamedani2021decentralized}.

In this experiments, we set $n=100$, $N=50$, $p_i=50$, and $f_i(x)=\frac{1}{2}\norm{x}^2$ for all $i\in\cN$. We generate a vector $\bar x\in\reals^n$ such that its entries are i.i.d with uniform distribution on $[-1,1]$. For each $i\in\cN$, $A_i$ is generated with a standard Gaussian distribution, $\eta_i$ uniformly at random on $[1,2]$, and $b_i=A_ix+\epsilon_i$ where $\epsilon_i$ is generated with a normal distribution of mean zero and variance of 0.01. Moreover, to generate the network $\cG$, we generated a random small-world network, i.e., we create a cycle over nodes, then we add $N/2$ edges at random with uniform probability. This leads to a connected graph with $|\cE|=75$. 

The results are depicted in Figure \ref{fig1} in terms of suboptimality, infeasibility, and consensus violation versus the number of communications. Note that DPDA-S performs $N$ communications at each iteration while AD-APD performs only one communication per iteration. From Figure \ref{fig1}, we see within the same number of communications AD-APD with asynchronous updates has a better performance than DPDA-S with synchronous updates. \vspace{-5mm}
\begin{figure}[htbp]\centering
\subfloat[{suboptimality}]{{\includegraphics[scale=0.2]{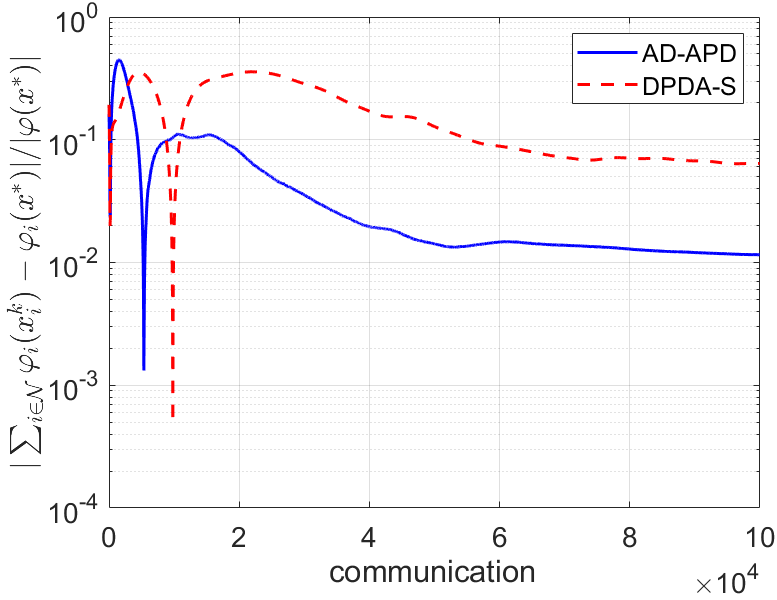}}}
\subfloat[{infeasibility}]{{\includegraphics[scale=0.2]{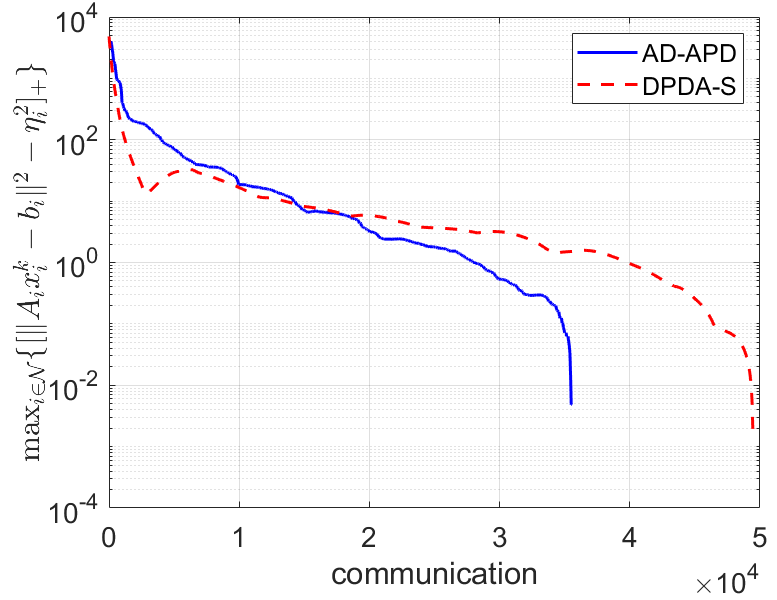}}}\\
\subfloat[{consensus violation}]{{\includegraphics[scale=0.2]{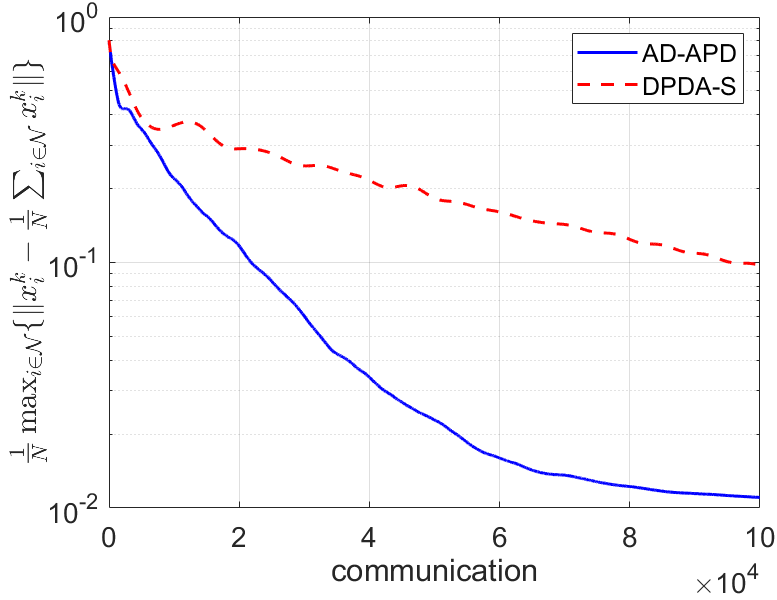}}}
\caption{Comparison of AD-APD and DPDA-S.}
\label{fig1}
\end{figure}
\bibliographystyle{IEEEtran}
\vspace{-1mm}
\bibliography{reference}
\end{document}